\documentclass[]{spie}

\newcommand{\C}{\mathbb{C}}

\usepackage{amsmath,amsfonts,amssymb}
\usepackage{graphicx}
\usepackage[colorlinks=true, allcolors=blue]{hyperref}
\usepackage{epstopdf}
\usepackage[noabbrev,capitalise]{cleveref}
\usepackage{algorithmic}

\title{Biangular Gabor frames and Zauner's conjecture}

\author{Mark Magsino}
\author{Dustin G. Mixon}

\affil{Department of Mathematics, The Ohio State University, Columbus, OH 43210}

\authorinfo{Send correspondence to \texttt{magsino.2@osu.edu} and \texttt{mixon.23@osu.edu}}

\pagestyle{empty}
\setcounter{page}{123} 

\newtheorem{thm}{Theorem}

\newtheorem{lem}[thm]{Lemma}
\newtheorem{cor}[thm]{Corollary}
\newtheorem{example}[thm]{Example}
\newtheorem{problem}[thm]{Problem}

\crefname{prop}{Proposition}{Propositions}
\begin{document}
\maketitle

\begin{abstract}
Two decades ago, Zauner conjectured that for every dimension $d$, there exists an equiangular tight frame consisting of $d^2$ vectors in $\C^d$.
Most progress to date explicitly constructs the promised frame in various dimensions, and it now appears that a constructive proof of Zauner's conjecture may require progress on the Stark conjectures.
In this paper, we propose an alternative approach involving biangular Gabor frames that may eventually lead to an unconditional non-constructive proof of Zauner's conjecture.
\end{abstract}

\section{Introduction}
Let $F = \{f_j\}_{j=1}^n$ denote a finite sequence of vectors in $\C^d$.
We say $F$ is a \textbf{frame} for $\C^d$ if there exist $A,B > 0$ such that for every $x \in \C^d$, it holds that
\[
A \|x\|_2^2
\leq \sum_{j=1}^n |\langle x, f_j \rangle|^2
\leq B \|x\|_2^2.
\]
We say $F$ is \textbf{tight} if one may take $A = B$, and we say $F$ is \textbf{unit norm} if $\|f_j\|_2 = 1$ for every $j$.
Finally, we say a unit norm $F$ is \textbf{equiangular} if there exists $\alpha \geq 0$ such that $|\langle f_j, f_{j'} \rangle|^2 = \alpha$ whenever $j\neq j'$.
The lines spanned by the vectors in an equiangular tight frame (ETF) happen to form an optimal packing of points in projective space, as they achieve equality in the so-called Welch bound \cite{welch1974lower}.
As an artifact of this optimality, ETFs enjoy applications in compressed sensing \cite{bandeira2013road}, digital fingerprinting \cite{mixon2013fingerprinting}, multiple description coding \cite{strohmer2003grassmannian}, and quantum state tomography \cite{renes2004symmetric}.
The Gerzon bound \cite{LemSeiGre1991} states that there exists an equiangular tight frame of $n$ vectors in $\C^d$ only if $n \leq d^2$.
Zauner conjectured in his doctoral thesis\cite{Zau1999} that for every dimension $d$, there exists an equiangular tight frame that saturates the Gerzon bound.
Such an equiangular tight frame is also known as a \textbf{symmetric informationally complete positive operator-valued measure (SIC)}.

In the sequel, we identify $\C^d$ with the space of complex-valued functions over $\mathbb{Z}_d:=\mathbb{Z}/d\mathbb{Z}$.
Put $\omega := e^{2 \pi i / d}$, and define the \textbf{translation} and \textbf{modulation} operators $T,M\colon\C^d \to \C^d$ by
\[
(Tv)(j) = v(j-1),
\qquad
(Mv)(j) = \omega^j\cdot v(j),
\qquad
v\in\mathbb{C}^d.
\]
It is straightforward to verify that $G(v):=\{M^\ell T^k v\}_{k,\ell=0}^{d-1}$ is a tight frame with frame bound $d\|v\|^2$ for every choice of $v\neq 0$.
We refer to $G(v)$ as the \textbf{Gabor frame} generated by $v$. 
When $G(v)$ is equiangular, we say that $v$ is a \textbf{fiducial vector}.
In his conjecture, Zauner actually predicted the existence of a fiducial vector (of a particular form) in $\mathbb{C}^d$ for every $d$.
As a consequence of the theory of projective $t$-designs \cite{roy2007weighted}, it holds that
\begin{equation}
\label{eq.potential}
\frac{1}{d}\sum_{k,\ell=0}^{d-1}|\langle v,M^\ell T^kv\rangle|^4
\geq\frac{2}{d+1},
\end{equation}
with equality precisely when $v$ is a fiducial vector.
As such, one can hunt for fiducial vectors by numerically minimizing the left-hand side of \eqref{eq.potential}, and in fact, this approach has been used to identify putative fiducial vectors to machine precision for every $d\leq 151$, and for a handful of larger dimensions \cite{fuchs2017sic}.
We say ``putative fiducial vectors'' because it is possible (albeit unlikely) that there is no solution to the defining system of polynomials that resides in a neighborhood of the numerical solution; a guarantee to the contrary would require a version of the \L{ojasiewicz} inequality \cite{ji1992global} with explicit constants.

In order to identify honest fiducial vectors, one is inclined to solve the defining system of polynomials, and to this end, solutions have been obtained by Gr\"{o}bner basis calculation \cite{scott2010symmetric}.
However, calculating a Gr\"{o}bner basis for even modest polynomial systems requires a substantial amount of memory and runtime, and so progress with this approach quickly stalled.
Interestingly, the resulting fiducial vectors exhibit some predictable field structure \cite{appleby2017sics}, and these observations have been leveraged to systematically promote numerical solutions to exact solutions \cite{appleby2018constructing} in dimensions that are much too large for Gr\"{o}bner basis calculation.
At this point, a new bottleneck has emerged: the naive description length of fiducial vectors grows quickly with the dimension.
Case in point, the exact coordinates of one fiducial in dimension $d=48$ ``occupies almost a thousand A4 pages (font size 9 and narrow margins)'' \cite{appleby2018constructing}.
Figure~\ref{fig:main} illustrates that the description length appears to scale like $d^4$.
Presumably, these coordinates enjoy a more compact description in some other representation.
For example, the number fields in which the known fiducial coordinates reside are conjectured to be generated by Stark units \cite{stark1976functions}.
Recently, Kopp \cite{kopp2018sic} leveraged Stark units to formulate a conjectural construction of fiducial vectors in prime dimensions $d\equiv 2\bmod 3$, and this construction produced the first known exact solution in dimension $d=23$.

\begin{figure}[t]
\centering
\includegraphics[width=0.95\textwidth]{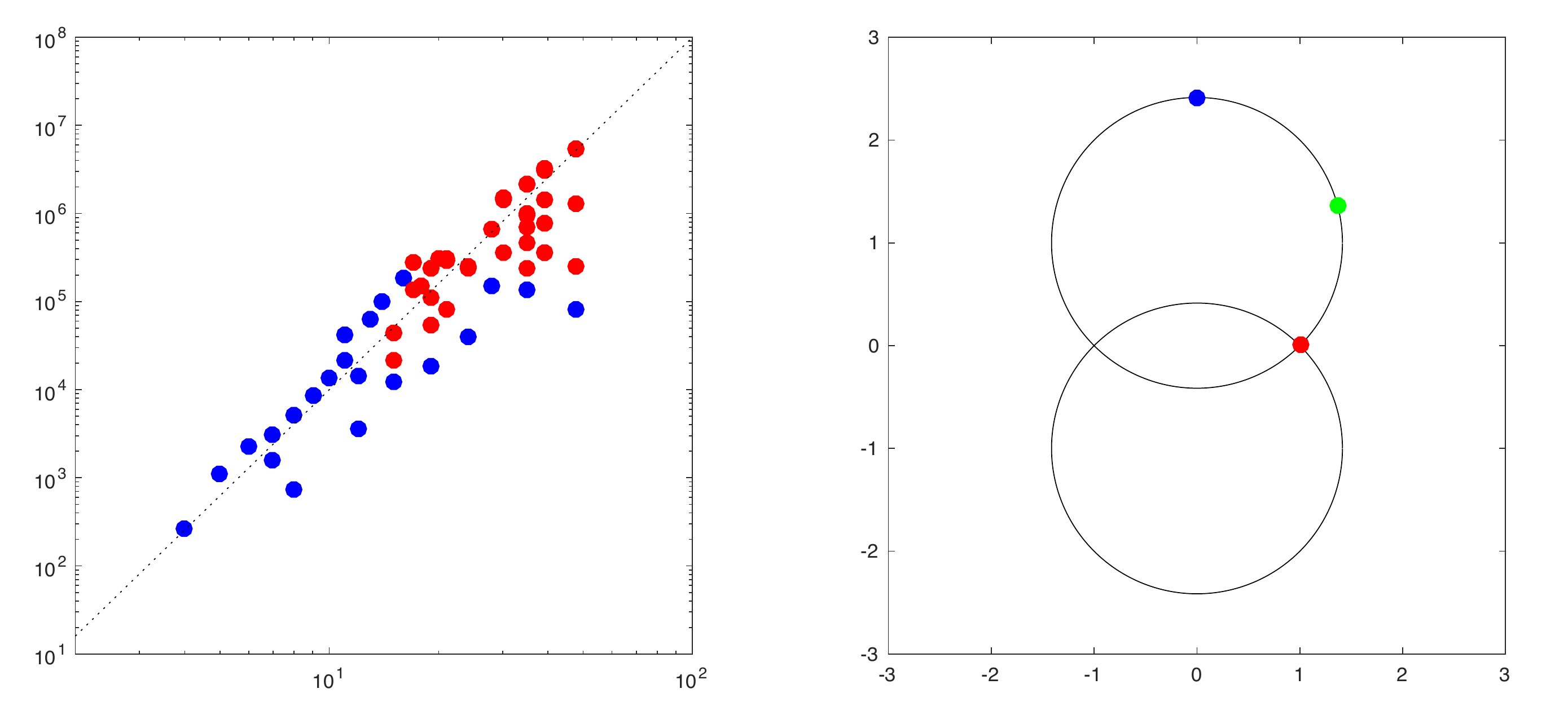}
\caption{\textbf{(left)}
Description lengths of naive expressions of fiducial vectors of SICs.
For each fiducial reported \cite{Zauner:online,Flammia:online}, count the number of characters used to describe the coordinates of the fiducial vector, and plot the results.
Blue dots correspond to solutions obtained by Gr\"{o}bner basis calculation, and red dots correspond to solutions obtained from promoting numerical solutions.
The horizontal axis corresponds to the dimension $d$, while the vertical axis denotes the description length.
The dotted line plots the curve $d^4$.
This illustrates that the naive representation of fiducial coordinates in terms of radicals is inefficient, and so a more compact representation (in terms of Stark units\cite{stark1976functions}, say) is necessary before one can find a constructive proof of Zauner's conjecture.
\textbf{(right)}
Illustration of the proposed method.
The set of $(x,y)$ for which $G((1,x+yi))$ is biangular equals the union of two intersecting circles, plotted in black.
The blue dot at $(x,y)=(0,1+\sqrt{2})$ corresponds to a Gabor MUB, while the red dot at $(x,y)=(1,0)$ corresponds to the trivial biangular Gabor frame $G(\mathbf{1})$.
The angles $\alpha$ and $\beta$ satisfy $\alpha<\beta$ in the former case and $\alpha>\beta$ in the latter case.
As such, for any curve connecting these two points, there exists a point $(x^\star,y^\star)$ at which $\alpha=\beta$, i.e., $G((1,x^\star+y^\star i))$ is a SIC.
Such a point is plotted in green.
}
\label{fig:main}
\end{figure}

Overall, the community appears to be converging towards a constructive proof of Zauner's conjecture that is conditional on the Stark conjectures.
As an unconditional alternative, one might entertain the possibility of a non-constructive proof.
One idea along these lines, posed by Peter Shor on MathOverflow \cite{Shor:online}, is to leverage some sort of geometric fixed point theorem; sadly, no progress in this direction has been made public.
In this paper, we propose another possible route towards a non-constructive proof.
In particular, we relax the set of equiangular Gabor frames to a set of \textit{biangular} Gabor frames.
This larger set includes well-known constructions of \textit{mutually unbiased bases}.
We observe that this set is frequently one-dimensional, which opens up the possibility of a proof of Zauner's conjecture by the intermediate value theorem.

\section{The proposed approach}
\label{sec.approach}

In this section, we outline an approach to prove Zauner's conjecture using the intermediate value theorem.
We say $G(v)$ is \textbf{biangular} if there exists $\alpha$ and $\beta$ such that
\begin{itemize}
\item[(i)]
$|\langle v, T^k v \rangle|^2 = \alpha$ for every $k\in\{1,\ldots,d-1\}$, and
\item[(ii)]
$|\langle v, M^\ell T^k v \rangle|^2 = \beta$ for every $k\in\{0,\ldots,d-1\}$ and $\ell\in\{1,\ldots,d-1\}$.
\end{itemize}
In this case, we can be more precise by saying that $G(v)$ is $(\alpha,\beta)$-biangular.
We note that the angle parameters $\alpha$ and $\beta$ depend on one another:

\begin{lem}
\label{lem.angle balance}
If $G(v)$ is an $(\alpha,\beta)$-biangular Gabor frame for $\mathbb{C}^d$, then $\alpha+d\beta=\|v\|_2^4$.
\end{lem}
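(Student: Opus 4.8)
The plan is to leverage the tightness of the Gabor frame. Recall from the excerpt that $G(v)=\{M^\ell T^k v\}_{k,\ell=0}^{d-1}$ is a tight frame for $\mathbb{C}^d$ with frame bound $d\|v\|_2^2$, which is equivalent to the Parseval-type identity
\[
\sum_{k,\ell=0}^{d-1}|\langle x,M^\ell T^k v\rangle|^2=d\|v\|_2^2\,\|x\|_2^2
\qquad\text{for all }x\in\mathbb{C}^d.
\]
First I would specialize this identity to $x=v$, obtaining $\sum_{k,\ell=0}^{d-1}|\langle v,M^\ell T^k v\rangle|^2=d\|v\|_2^4$.

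Next I would partition the index set $\{0,\ldots,d-1\}^2$ into three pieces and evaluate the corresponding sub-sums using biangularity: (a) the single pair $(k,\ell)=(0,0)$, which contributes $|\langle v,v\rangle|^2=\|v\|_2^4$; (b) the $d-1$ pairs with $\ell=0$ and $k\in\{1,\ldots,d-1\}$, each contributing $|\langle v,T^k v\rangle|^2=\alpha$ by condition (i), for a total of $(d-1)\alpha$; and (c) the $d(d-1)$ pairs with $\ell\in\{1,\ldots,d-1\}$ and $k\in\{0,\ldots,d-1\}$, each contributing $|\langle v,M^\ell T^k v\rangle|^2=\beta$ by condition (ii), for a total of $d(d-1)\beta$. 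Summing the three contributions and equating with $d\|v\|_2^4$ yields $d\|v\|_2^4=\|v\|_2^4+(d-1)\alpha+d(d-1)\beta$, i.e., $(d-1)\|v\|_2^4=(d-1)(\alpha+d\beta)$.

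Finally, dividing by $d-1$ (the case $d=1$ being vacuous, as there are then no off-diagonal inner products) gives the claimed identity $\alpha+d\beta=\|v\|_2^4$. I do not anticipate a genuine obstacle; the only points that warrant a moment's care are that the three index pieces are disjoint and exhaust $\{0,\ldots,d-1\}^2$, and that condition (ii) indeed includes the $k=0$ column, so that piece (c) really contributes $d(d-1)$ terms rather than $(d-1)^2$. An alternative derivation avoiding the frame identity would expand $\|v\|_2^4=|\langle v,v\rangle|^2$ against a resolution of the identity built from $G(v)$, but the tight-frame route above is the cleanest.
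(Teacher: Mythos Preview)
Your proof is correct and follows essentially the same approach as the paper: specialize the tight-frame identity (frame bound $d\|v\|_2^2$) to $x=v$, split the sum according to the three index groups, and rearrange. The paper's version is simply the one-line compression of your argument.
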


\begin{proof}
By tightness, we have
\[
d\|v\|_2^4
=\sum_{k,\ell=0}^{d-1}|\langle v,M^\ell T^k v\rangle|^2
=\|v\|_2^4+(d-1)\alpha+(d^2-d)\beta,
\]
and so rearranging gives the result.
\end{proof}

It is helpful to consider a few examples of biangular Gabor frames:

\begin{example}
\begin{itemize}
\item[(a)]
Let $\mathbf{1}$ denote the all-ones vector in $\mathbb{C}^d$.
Then $G(\mathbf{1})$ is biangular.
\item[(b)]
Suppose $G(v)$ is a $(0,1/d)$-biangular Gabor frame in $\mathbb{C}^d$.
Then each $\{M^\ell T^kv\}_{k=0}^{d-1}$ is an orthonormal basis, and together, these bases are \textit{mutually unbiased} \cite{planat2006survey}.
For example, if $d\geq 5$ is prime and $v$ is the Fourier transform of the corresponding \textit{Alltop sequence} \cite{alltop1980complex}
\[
f(t):=\frac{1}{\sqrt{d}}\cdot e^{2\pi it^3/d},
\qquad
t\in\{0,\ldots,d-1\},
\]
then $G(v)$ is $(0,1/d)$-biangular.
We refer to such $G(v)$ as \textbf{Gabor MUBs}.
\item[(c)]
If $G(v)$ is equiangular, then $G(v)$ is biangular with $\alpha=\beta$.
\item[(d)]
If $G(v)$ is biangular, then $G(cv)$ is also biangular for every $c\in\mathbb{C}^\times$.
\end{itemize}
\end{example}

Let $B_d$ denote the real algebraic variety of $v\in\C^d$ for which $G(v)$ is biangular.
Perhaps surprisingly, we observe that $B_d/\mathbb{C}^\times$ is at times one-dimensional even though $B_d$ is defined by $\Omega(d^2)$ polynomials over $2d$ real variables.
We suspect that this feature can be leveraged to prove the existence of SICs.
For example, the following result allows us to promote MUBs to SICs:

\begin{lem}
\label{lem.mub+connected}
Suppose there exists a Gabor MUB in $\mathbb{C}^d$ and $B_d$ is path-connected.
Then there exists a SIC in $\mathbb{C}^d$.
\end{lem}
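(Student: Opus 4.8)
The plan is to run an intermediate value theorem argument on a continuous ``angle discrepancy'' function over $B_d$, using the promised Gabor MUB and the trivial biangular frame $G(\mathbf 1)$ as the two endpoints, at which the discrepancy has opposite signs. The case $d=1$ is trivial ($d^2=1$), so assume $d\ge 2$. Define $g\colon\C^d\to\R$ by
\[
g(v):=|\langle v,Tv\rangle|^2-|\langle v,Mv\rangle|^2 ,
\]
a real polynomial of degree four in the real and imaginary parts of the coordinates of $v$, hence continuous on all of $\C^d\cong\R^{2d}$. The key observation is that $g$ computes $\alpha-\beta$ on $B_d$: if $v\in B_d$ and $G(v)$ is $(\alpha,\beta)$-biangular, then (using $d\ge2$) condition (i) with $k=1$ gives $|\langle v,Tv\rangle|^2=\alpha$ and condition (ii) with $(k,\ell)=(0,1)$ gives $|\langle v,Mv\rangle|^2=\beta$, so $g(v)=\alpha-\beta$.

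Next I would evaluate $g$ at two members of $B_d$ of opposite sign. By hypothesis there is $v_0$ with $G(v_0)$ a Gabor MUB, that is, $(0,1/d)$-biangular; then $v_0\in B_d$ and $g(v_0)=-\tfrac1d<0$. On the other hand $T\mathbf 1=\mathbf 1$ and $\langle\mathbf 1,M^\ell\mathbf 1\rangle=\sum_{j=0}^{d-1}\omega^{\ell j}=0$ for $\ell\not\equiv 0\pmod d$, so $G(\mathbf 1)$ is $(d^2,0)$-biangular, whence $\mathbf 1\in B_d$ and $g(\mathbf 1)=d^2>0$. Since $B_d$ is path-connected there is a continuous path $\gamma\colon[0,1]\to B_d$ from $v_0$ to $\mathbf 1$; then $g\circ\gamma$ is continuous with $g(\gamma(0))<0<g(\gamma(1))$, so the intermediate value theorem produces $t^\star\in(0,1)$ with $g(\gamma(t^\star))=0$. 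Put $v^\star:=\gamma(t^\star)\in B_d$; by the previous paragraph $G(v^\star)$ is $(\lambda,\lambda)$-biangular for some $\lambda$, and after rescaling $v^\star$ to unit norm (which preserves biangularity) \cref{lem.angle balance} forces $\lambda=1/(d+1)$.

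It remains to check that $G(v^\star)$ is a SIC. It is a tight frame with $d^2$ vectors for free. A routine computation with the Weyl--Heisenberg relation $MT=\omega TM$ shows that for distinct $(k,\ell),(k',\ell')$ in $\{0,\dots,d-1\}^2$,
\[
|\langle M^\ell T^kv^\star,\, M^{\ell'}T^{k'}v^\star\rangle|^2
=|\langle v^\star,\, M^{\ell'-\ell}T^{k'-k}v^\star\rangle|^2 ,
\]
and reducing the exponents modulo $d$ this equals $\alpha$ when $\ell\equiv\ell'$ and $\beta$ otherwise; as $\alpha=\beta=\lambda$, the frame is equiangular. Moreover no two of the $d^2$ unit-norm frame vectors are parallel, since that would force their squared inner product to be $1\neq\lambda$. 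Hence $G(v^\star)$ is an equiangular tight frame of $d^2$ vectors in $\C^d$, i.e.\ a SIC.

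The proof is genuinely short once the two hypotheses are granted; the only subtlety is the first-paragraph observation that the angle discrepancy extends to an honestly continuous function on $B_d$ (so that the intermediate value theorem bites), together with the bookkeeping at the crossing point that rules out a degenerate frame with fewer than $d^2$ distinct vectors. The real difficulty sits outside this lemma: one still has to exhibit a Gabor MUB and, more delicately, establish that $B_d$ is path-connected.
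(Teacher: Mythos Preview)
Your proof is correct and follows essentially the same intermediate value theorem argument as the paper: connect a Gabor MUB to the trivial biangular frame $G(\mathbf 1)$ by a path in $B_d$ and track the continuous angle discrepancy $\alpha-\beta$ along it. The only cosmetic differences are that the paper normalizes the path to the unit sphere up front (your unnormalized $g$ sidesteps this ``without loss of generality''), uses $MT$ rather than $M$ to read off $\beta$, and omits the closing bookkeeping that the equiangular Gabor frame at the crossing point is indeed a SIC.
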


\begin{proof}
Select $v_0$ such that $G(v_0)$ is an MUB, put $v_1=\frac{1}{\sqrt{d}}\mathbf{1}$.
Then by path-connectivity, there exists a parameterized curve $v\colon[0,1]\to\mathbb{C}^d$ such that $v(0)=v_0$, $v(1)=v_1$, and $G(v(t))$ is biangular for every $t\in(0,1)$.
Without loss of generality, it holds that $\|v(t)\|_2=1$ for every $t$.
Define $\alpha,\beta\colon[0,1]\to\mathbb{R}$ such that
\[
\alpha(t)
:=|\langle v(t), T v(t) \rangle|^2,
\qquad
\beta(t)
:=|\langle v(t), MT v(t) \rangle|^2,
\qquad
t\in[0,1].
\]
By Lemma~\ref{lem.angle balance}, it holds that
\[
\Delta(t)
:=\beta(t)-\alpha(t)
=\frac{1-\alpha(t)}{d}-\alpha(t)
=\frac{1-(d+1)\alpha(t)}{d}.
\]
Considering $\alpha(0)=0$ and $\alpha(1)=1$, then the continuous function $\Delta\colon[0,1]\to\mathbb{R}$ satisfies $\Delta(0)=1/d>0$ and $\Delta(1)=-1<0$.
The intermediate value theorem then guarantees the existence of $t^\star\in(0,1)$ such that $\Delta(t^\star)=0$, i.e., $\alpha(t^\star)=\beta(t^\star)$.
As such, $G(v(t^\star))$ is equiangular, i.e., the claimed SIC.
\end{proof}

Importantly, Gabor MUBs (unlike SICs) are known to exist in infinitely many dimensions.
The bottleneck of applying Lemma~\ref{lem.mub+connected} is demonstrating path-connectivity.
The following provides a sufficient condition to this end:

\begin{lem}
\label{lem.sufficient connectivity}
If $C_d:=\{v\in B_d:v(0)=1\}$ is path-connected, then $B_d$ is path-connected.
\end{lem}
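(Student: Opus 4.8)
The plan is to show that every $v\in B_d$ can be joined, by a path lying inside $B_d$, to the all-ones vector $\mathbf{1}$, which itself lies in $B_d$ by item (a) of the Example above. Because $\mathbf{1}$ is fixed by every translation, it will serve as a single hub to which all of $B_d$ connects. Two preliminary facts drive the argument: first, each translation operator $T^m$ restricts to a homeomorphism of $B_d$ onto itself; and second, multiplication by a nonzero scalar moves along a path within $B_d$, which is immediate from item (d) of the Example together with the path-connectedness of $\mathbb{C}^\times$.

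For the first fact I would observe that $T^m$ is a linear homeomorphism of $\mathbb{C}^d$, so it suffices to check that $G(T^m v)$ is $(\alpha,\beta)$-biangular whenever $G(v)$ is. Using the unitarity of $T$ and the commutation relation $MT=\omega TM$, a one-line computation gives $\langle T^m v,\,T^k T^m v\rangle=\langle v,\,T^k v\rangle$ and $|\langle T^m v,\,M^\ell T^k T^m v\rangle|=|\langle v,\,M^\ell T^k v\rangle|$ (a stray power of $\omega$ is absorbed by the modulus), so conditions (i) and (ii) transfer verbatim with the same $\alpha$ and $\beta$. Hence $T^m(B_d)=B_d$ for every $m$.

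Now fix an arbitrary $v\in B_d$; we may assume $v\neq 0$ (the all-ones segment $t\mapsto t\mathbf{1}$ handles $v=0$ should it occur). Choose a coordinate $m$ with $v(m)\neq 0$. Selecting a path in $\mathbb{C}^\times$ from $1$ to $1/v(m)$ and scaling $v$ along it produces a path in $B_d$ from $v$ to $w:=v/v(m)$, and $(T^{-m}w)(0)=w(m)=1$, so $T^{-m}w\in C_d$. Since also $T^{-m}\mathbf{1}=\mathbf{1}\in C_d$ and $C_d$ is path-connected by hypothesis, there is a path in $C_d$ from $T^{-m}w$ to $\mathbf{1}$; applying the homeomorphism $T^m$ from the first step carries it to a path in $B_d$ from $w$ to $T^m\mathbf{1}=\mathbf{1}$. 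Concatenating the two paths joins $v$ to $\mathbf{1}$ inside $B_d$, and since $v$ was arbitrary, $B_d$ is path-connected.

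The step I expect to need the most care is the reduction to a single basepoint. The slice $C_d$ meets only those $\mathbb{C}^\times$-orbits whose $0$-th coordinate is nonzero, so a general $v$ must first be scaled and then effectively translated into that position; and a priori the translates $T^m v^\star$ of one point $v^\star\in C_d$ occupy the distinct slices $\{w\in B_d:w(m)=1\}$ with no evident path among them inside $B_d$. Routing every path through $\mathbf{1}$ sidesteps this, precisely because $\mathbf{1}$ is the unique element of $C_d$ fixed by all translations and therefore lies in every such slice simultaneously. Everything else is routine bookkeeping with path concatenation.
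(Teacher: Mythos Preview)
Your argument is correct and follows essentially the same route as the paper: scale an arbitrary $v\in B_d$ so that a chosen nonzero coordinate becomes $1$, use translation symmetry to land in $C_d$, and connect to the hub $\mathbf{1}$ there. The only difference is cosmetic---the paper phrases the translation step as ``by symmetry, every $S_j:=\{v\in B_d:v(j)=1\}$ is path-connected,'' whereas you make that symmetry explicit by verifying $T^m(B_d)=B_d$ and transporting paths via $T^m$.
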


\begin{proof}
Suppose $C_d$ is path-connected, and for each $j\in\mathbb{Z}_d$, denote $S_j:=\{v\in B_d:v(j)=1\}$ so that $S_0=C_d$.
Then by symmetry, every $S_j$ is path-connected.
To see that $B_d$ is also path-connected, pick any $v_0,v_1\in B_d$.
For each $k\in\{0,1\}$, we have that $v_k$ is nonzero by assumption, and so one of its coordinates is nonzero, say, coordinate $j_k\in\mathbb{Z}_d$.
Let $c_k\colon[0,1]\to\mathbb{C}^\times$ denote any parameterized curve in $\mathbb{C}^\times$ from $c_k(0)=1$ to $c_k(1)=v_k(j_k)$.
Then $v_k(t):=v_k/c_k(t)$ is a curve in $B_d$ such that $v_k(0)=v_k$ and $v_k(1)\in S_{j_k}$.
As such, we can traverse from $v_0$ to $v_0/v_0(j_0)$ along $v_0(\cdot)$, and then from $v_0/v_0(j_0)$ to $\mathbf{1}$ by the path-connectivity of $S_{j_0}$, and then from $\mathbf{1}$ to $v_1/v_1(j_1)$ by the path-connectivity of $S_{j_1}$, and then from $v_1/v_1(j_1)$ to $v_1$ along the reversal of $v_1(\cdot)$.
\end{proof}

As a proof of concept, we leverage the above results to prove the (well-known) existence of a SIC in $\mathbb{C}^2$.
(Importantly, our proof is non-constructive, unlike the usual proof.)

\begin{cor}
\label{cor.C2}
There exists a SIC in $\mathbb{C}^2$.
\end{cor}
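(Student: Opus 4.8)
The plan is to invoke \Cref{lem.mub+connected} in dimension $d=2$, which reduces the task to verifying two facts: that a Gabor MUB exists in $\mathbb{C}^2$, and that $B_2$ is path-connected. For the first, one exhibits an explicit $(0,1/2)$-biangular Gabor frame; a standard choice is $v_0=\tfrac{1}{\sqrt{2}}(1,i)$, since $\langle v_0,Tv_0\rangle=0$ and $|\langle v_0,MT v_0\rangle|^2=|\langle v_0,M v_0\rangle|^2=1/2$, so one checks the two biangularity conditions directly. (Note the Alltop construction in Example~(b) requires $d\ge 5$ prime, so it does not apply here and one must supply the $d=2$ MUB by hand.) For path-connectivity, by \Cref{lem.sufficient connectivity} it suffices to show $C_2=\{v\in B_2 : v(0)=1\}$ is path-connected.

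Next I would parameterize $C_2$ explicitly. A vector $v=(1,z)$ with $z=x+yi\in\mathbb{C}$ generates a Gabor frame in $\mathbb{C}^2$, and since $d=2$ there is only one relevant translation ($k=1$) and the modulation conditions reduce to a single angle, so biangularity imposes exactly one polynomial equation in $(x,y)$. Concretely, $\langle v,Tv\rangle = \langle(1,z),(z,1)\rangle = \bar z + z = 2x$, so $\alpha = 4x^2$; and computing $|\langle v, MT v\rangle|^2$ and $|\langle v, M v\rangle|^2$ and setting them equal (condition (ii) with a single value of $\ell$ is automatic for $d=2$, so the real constraint is that both $\ell=1$ inner products at $k=0,1$ have the same modulus) yields a quadratic relation. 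As indicated in the caption of \Cref{fig:main}, the resulting locus in the $(x,y)$-plane is a union of two circles meeting at two points, and after quotienting by the coordinate normalization one gets the set $C_2$. I would carry out this computation to confirm the locus is exactly (a quotient of) two circles intersecting in two points, hence path-connected: any two points on the configuration can be joined by moving along one circle, through an intersection point, and along the other.

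With those two ingredients in place, \Cref{lem.mub+connected} immediately produces the SIC: the path from $v_0$ to $\tfrac{1}{\sqrt 2}\mathbf{1}$ inside $B_2$ carries $\Delta(t)=\beta(t)-\alpha(t)$ from $1/d=1/2>0$ to $-1<0$, and the intermediate value theorem locates $t^\star$ with $\alpha(t^\star)=\beta(t^\star)$, i.e., an equiangular Gabor frame, which is a SIC in $\mathbb{C}^2$.

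The only real obstacle is the explicit description of $B_2$ (equivalently $C_2$): one must correctly set up and simplify the biangularity equations for $d=2$, being careful about which inner products genuinely impose constraints when $d$ is so small, and then recognize the solution set as a union of two circles so that path-connectivity is visible. Everything else — checking the $d=2$ MUB, and assembling the pieces via the preceding lemmas — is routine. It is worth remarking that this argument is genuinely non-constructive: it certifies the existence of $t^\star$ without computing it, in contrast to the usual explicit $d=2$ SIC.
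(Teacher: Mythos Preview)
Your overall plan mirrors the paper's proof exactly: exhibit a Gabor MUB in $\mathbb{C}^2$, compute $C_2$ as two intersecting circles (hence path-connected), and then invoke \Cref{lem.sufficient connectivity} and \Cref{lem.mub+connected}. That part is fine, and your reading of the $d=2$ biangularity constraints is correct: the only nontrivial equation is $|\langle v,Mv\rangle|^2=|\langle v,MTv\rangle|^2$, which for $v=(1,x+yi)$ becomes $(1-x^2-y^2)^2=4y^2$, i.e.\ $x^2+(y\pm 1)^2=2$.

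However, your explicit MUB seed $v_0=\tfrac{1}{\sqrt{2}}(1,i)$ is wrong, and the sentence ``$|\langle v_0,MTv_0\rangle|^2=|\langle v_0,Mv_0\rangle|^2=1/2$'' is false. With $\omega=-1$ one has $Mv_0=\tfrac{1}{\sqrt{2}}(1,-i)$ and $MTv_0=\tfrac{1}{\sqrt{2}}(i,-1)$, so
\[
\langle v_0,Mv_0\rangle=\tfrac{1}{2}\bigl(1-|i|^2\bigr)=0,
\qquad
\langle v_0,MTv_0\rangle=\tfrac{1}{2}\bigl(\bar i - i\bigr)=-i,
\]
giving $|\langle v_0,Mv_0\rangle|^2=0$ and $|\langle v_0,MTv_0\rangle|^2=1$. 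Thus $G(v_0)$ is not even biangular, let alone a Gabor MUB; equivalently, the point $(x,y)=(0,1)$ does not lie on either circle $x^2+(y\pm 1)^2=2$. The correct seed (as the paper uses, and as the blue dot in \Cref{fig:main} indicates) is $u=(1,(1+\sqrt{2})i)$, for which $(x,y)=(0,1+\sqrt{2})$ satisfies $x^2+(y-1)^2=2$ and $\alpha=4x^2=0$. Once you replace $v_0$ by $u/\|u\|_2$, the rest of your argument goes through unchanged.
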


\begin{proof}
Put $u=(1,(1+\sqrt{2})i)$.
It is straightforward to verify that $G(u/\|u\|_2)$ is an MUB.
We will demonstrate that $C_2$ is path-connected so that the result follows from Lemmas~\ref{lem.mub+connected} and~\ref{lem.sufficient connectivity}.
To this end, note that $v\in C_2$ if and only if there exist $x,y,\alpha,\beta\in\mathbb{R}$ such that $v=(1,x+yi)$ and
\[
4x^2=\alpha,
\qquad
4y^2=\beta,
\qquad
(1-x^2-y^2)^2=\beta.
\]
In other words, $C_2$ is the set of all $(1,x+yi)$ such that $x^2+(y\pm1)^2=2$.
Geometrically, this is the union of two circles of radius $\sqrt{2}$ centered at $(0,\pm1)$; see Figure~\ref{fig:main} for an illustration.
Since these circles intersect, it follows that $C_2$ is path-connected, as desired.
\end{proof}

To prove Zauner's conjecture, we would need to replicate this non-constructive proof technique in every dimension.
This suggests the following:

\begin{problem}
\label{prob.connectivity}
For which dimensions $d$ is $B_d$ is path-connected?
\end{problem}

There has already been some work to prove path-connectivity of certain varieties of frames.
Most work along these lines has focused on the variety of unit norm tight frames.
Initial work \cite{cahill2017connectivity} leveraged so-called \textit{eigensteps} \cite{cahill2013constructing} to construct explicit paths that demonstrate path-connectivity, whereas a more recent treatment \cite{needham2018symplectic} exploits technology from symplectic geometry to obtain a non-constructive proof.
It would be interesting if similar technology could be applied to tackle Problem~\ref{prob.connectivity}.

Next, MUBs are only known to exist in prime power dimensions, and so we would need to improve Lemma~\ref{lem.mub+connected} before we can hope to prove Zauner's conjecture.
In fact, ``Gabor MUB'' in Lemma~\ref{lem.mub+connected} can be replaced by any biangular Gabor frame with appropriately small $\alpha$, suggesting the following:

\begin{problem}
\label{prob.construction}
For every $d$, find $v\in\mathbb{C}^d$ such that both $\|v\|_2=1$ and $G(v)$ is $(\alpha,\beta)$-biangular with $\alpha<\frac{1}{d+1}$.
\end{problem}

Considering the successful instance of Gabor MUBs, it seems reasonable to suspect that Problem~\ref{prob.construction} can be solved in closed form, even though the $\alpha=\frac{1}{d+1}$ case of SICs has resisted such a solution.
Finally, note that we do not require all of $B_d$ to be path-connected, as it suffices to find $v_0$ and $v_1$ for which there exist $\alpha_0$ and $\alpha_1$ such that
\begin{itemize}
\item[(i)]
$G(v_j)$ is $(\alpha_j,\frac{1}{d}(1-\alpha_j))$-biangular for each $j\in\{0,1\}$,
\item[(ii)]
$v_0$ and $v_1$ are path-connected in $B_d$, and
\item[(iii)]
$\alpha_0 < \frac{1}{d+1} < \alpha_1$.
\end{itemize}
Of course, it is likely easier to solve Problem~\ref{prob.connectivity}.

To illustrate our observation that biangular Gabor frames enjoy path-connectivity, we run a simple numerical experiment:
For each $d\in\{2,4,5\}$, we consider the numerical fiducial reported by Scott and Grassl \cite{scott2010symmetric} (when $d=3$, the variety of SIC fiducials is already interesting).
Call this vector $v_0$.
We slightly perturb this fiducial and then locally minimize the sum of the squares of the polynomials that define the variety of biangular Gabor seed vectors.
This produces a new point $v_1$ on the variety.
Next, we locally minimize from the perturbation $v_j+c\cdot\frac{v_j-v_{j-1}}{\|v_j-v_{j-1}\|_2}$ to obtain $v_{j+1}$ (with $j=1$), and we iterate this procedure to identify a sequence of points on the variety.
(Here, $c$ is a small constant.)
The results of this experiment are illustrated in Figure~\ref{fig:numerics}.

\begin{figure}[t]
\centering
\includegraphics[width=0.95\textwidth]{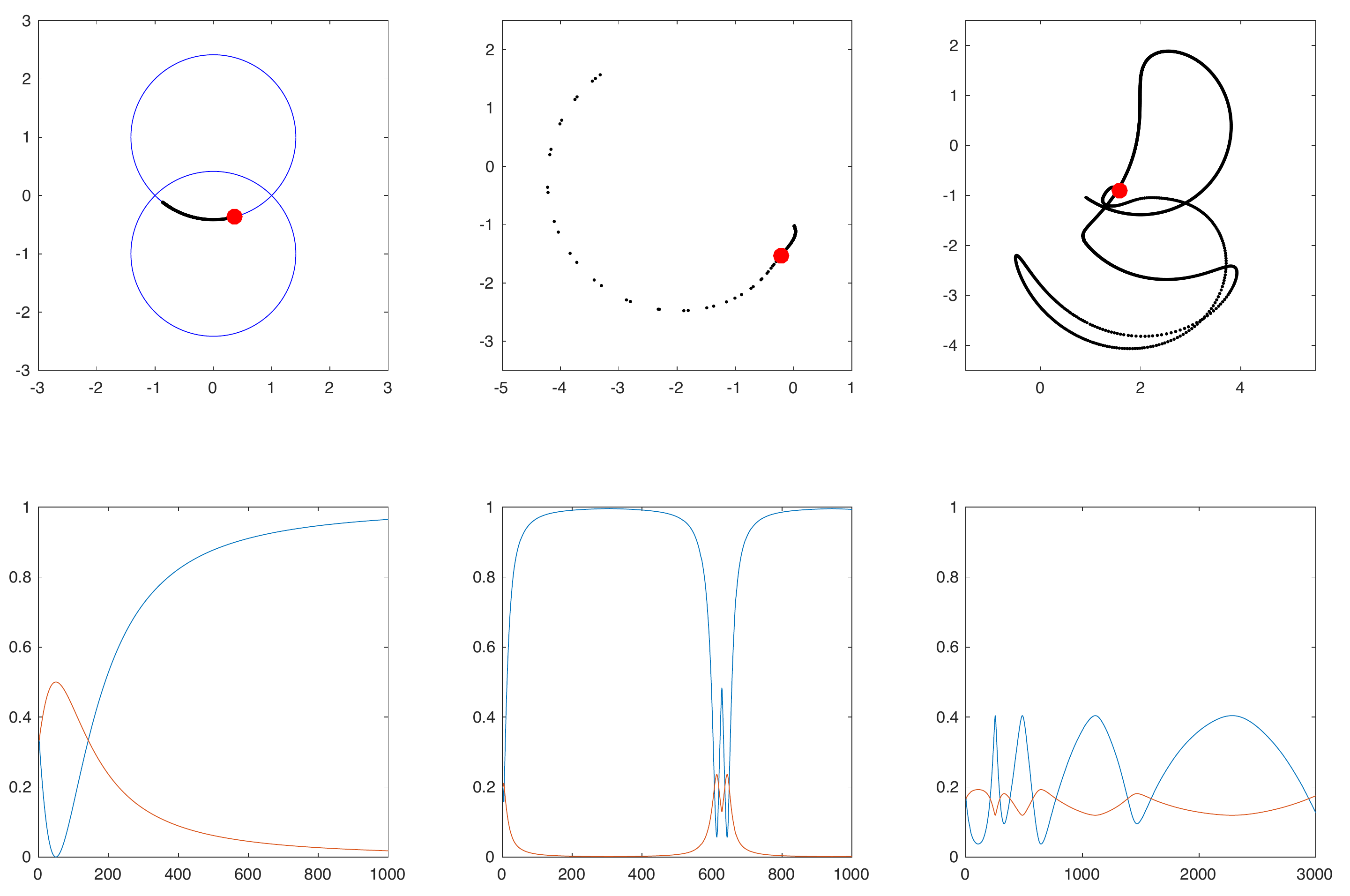}
\vspace{30pt}
\caption{Numerical experiments to illustrate path-connectivity in the variety of biangular Gabor frames.
\textbf{(left)}
As a control, we first consider the case of $d=2$, which we have already treated in the proof of Corollary~\ref{cor.C2}.
In blue, we plot the set of all $(x,y)$ for which $(1,x+yi)$ generates a biangular Gabor frame.
The red dot in this set corresponds to the numerical fiducial reported by Scott and Grassl \cite{scott2010symmetric}.
We then traverse the variety using the numerical scheme discussed at the end of Section~\ref{sec.approach}; we plot the corresponding trajectory in black.
In the display below, we plot how $(\alpha,\beta)$ evolve over this trajectory (we compute these angles after normalizing $v$ to have unit norm).
The angles start at $(\frac{1}{3},\frac{1}{3})$, corresponding to the SIC, then pass through $(0,\frac{1}{2})$, corresponding to an MUB, and then finally approach $(1,0)$, corresponding to a trivial Gabor frame.
We repeat this experiment for $d=4$ \textbf{(middle)} and $d=5$ \textbf{(right)}, fixing $v(0)=1$, plotting the trajectory of $(\operatorname{Re} v(1),\operatorname{Im} v(1))$ above, and then plotting the angles $(\alpha,\beta)$ below.
Unlike the $d=2$ case, we do not have analytic expressions for the variety in these cases.
}
\label{fig:numerics}
\end{figure}

\section{Discussion}

This paper proposed a new approach to tackle Zauner's conjecture.
Specifically, we relax the set of SICs to a larger set of biangular Gabor frames, which appear to form a path-connected variety.
This feature could very well allow for a non-constructive proof of Zuaner's conjecture, and we isolate Problems~\ref{prob.connectivity} and~\ref{prob.construction} as steps towards this end.
In addition, it would also be interesting to leverage the variety of biangular Gabor frames to facilitate the search for numerical SICs.
We leave these investigations for future work.

\section*{ACKNOWLEDGMENTS}
The authors thank John Jasper and Hans Parshall for commenting on a draft of this paper.
MM and DGM were partially supported by AFOSR FA9550-18-1-0107. DGM was also supported by NSF DMS 1829955 and the Simons Institute of the Theory of Computing.

\bibliography{biangle} 

\begin{thebibliography}{10}

\bibitem{welch1974lower}
L.~Welch, ``Lower bounds on the maximum cross correlation of signals
  (corresp.),'' {\em IEEE Transactions on Information theory}~{\bf 20}(3),
  pp.~397--399, 1974.

\bibitem{bandeira2013road}
A.~S. Bandeira, M.~Fickus, D.~G. Mixon, and P.~Wong, ``The road to
  deterministic matrices with the restricted isometry property,'' {\em Journal
  of Fourier Analysis and Applications}~{\bf 19}(6), pp.~1123--1149, 2013.

\bibitem{mixon2013fingerprinting}
D.~G. Mixon, C.~J. Quinn, N.~Kiyavash, and M.~Fickus, ``Fingerprinting with
  equiangular tight frames,'' {\em IEEE Transactions on Information
  Theory}~{\bf 59}(3), pp.~1855--1865, 2013.

\bibitem{strohmer2003grassmannian}
T.~Strohmer and R.~W. Heath~Jr, ``Grassmannian frames with applications to
  coding and communication,'' {\em Applied and computational harmonic
  analysis}~{\bf 14}(3), pp.~257--275, 2003.

\bibitem{renes2004symmetric}
J.~M. Renes, R.~Blume-Kohout, A.~J. Scott, and C.~M. Caves, ``Symmetric
  informationally complete quantum measurements,'' {\em Journal of Mathematical
  Physics}~{\bf 45}(6), pp.~2171--2180, 2004.

\bibitem{LemSeiGre1991}
P.~W. Lemmens, J.~J. Seidel, and J.~Green, ``Equiangular lines,'' in {\em
  Geometry and Combinatorics},  pp.~127--145, Elsevier, 1991.

\bibitem{Zau1999}
G.~Zauner, ``Grundz uge einer nichtkommutativen designtheorie,'' 1999.

\bibitem{roy2007weighted}
A.~Roy and A.~Scott, ``Weighted complex projective 2-designs from bases:
  {O}ptimal state determination by orthogonal measurements,'' {\em Journal of
  mathematical physics}~{\bf 48}(7), p.~072110, 2007.

\bibitem{fuchs2017sic}
C.~Fuchs, M.~Hoang, and B.~Stacey, ``The {SIC} question: {H}istory and state of
  play,'' {\em Axioms}~{\bf 6}(3), p.~21, 2017.

\bibitem{ji1992global}
S.~Ji, J.~Koll{\'a}r, and B.~Shiffman, ``A global {{\L}}ojasiewicz inequality
  for algebraic varieties,'' {\em Transactions of the American Mathematical
  Society}~{\bf 329}(2), pp.~813--818, 1992.

\bibitem{scott2010symmetric}
A.~J. Scott and M.~Grassl, ``Symmetric informationally complete
  positive-operator-valued measures: A new computer study,'' {\em Journal of
  Mathematical Physics}~{\bf 51}(4), p.~042203, 2010.

\bibitem{appleby2017sics}
M.~Appleby, S.~Flammia, G.~McConnell, and J.~Yard, ``{SIC}s and algebraic
  number theory,'' {\em Foundations of Physics}~{\bf 47}(8), pp.~1042--1059,
  2017.

\bibitem{appleby2018constructing}
M.~Appleby, T.-Y. Chien, S.~Flammia, and S.~Waldron, ``Constructing exact
  symmetric informationally complete measurements from numerical solutions,''
  {\em Journal of Physics A: Mathematical and Theoretical}~{\bf 51}(16),
  p.~165302, 2018.

\bibitem{stark1976functions}
H.~M. Stark, ``${L}$-functions at $s=1$. {III}. {T}otally real fields and
  {H}ilbert's twelfth problem,'' {\em Advances in Mathematics}~{\bf 22}(1),
  pp.~64--84, 1976.

\bibitem{kopp2018sic}
G.~S. Kopp, ``{SIC}-{POVM}s and the {S}tark conjectures,'' {\em arXiv preprint
  arXiv:1807.05877} , 2018.

\bibitem{Zauner:online}
``{SIC} fiducials.'' \url{http://www.gerhardzauner.at/sicfiducials.html}.

\bibitem{Flammia:online}
``Exact {SIC} fiducial vectors.''
  \url{http://www.physics.usyd.edu.au/~sflammia/SIC/}.

\bibitem{Shor:online}
``Fixed point theorems and equiangular lines.''
  \url{https://mathoverflow.net/questions/30894/fixed-point-theorems-and-equiangular-lines}.

\bibitem{planat2006survey}
M.~Planat, H.~C. Rosu, and S.~Perrine, ``A survey of finite algebraic
  geometrical structures underlying mutually unbiased quantum measurements,''
  {\em Foundations of Physics}~{\bf 36}(11), pp.~1662--1680, 2006.

\bibitem{alltop1980complex}
W.~Alltop, ``Complex sequences with low periodic correlations (corresp.),''
  {\em IEEE Transactions on Information Theory}~{\bf 26}(3), pp.~350--354,
  1980.

\bibitem{cahill2017connectivity}
J.~Cahill, D.~G. Mixon, and N.~Strawn, ``Connectivity and irreducibility of
  algebraic varieties of finite unit norm tight frames,'' {\em SIAM Journal on
  Applied Algebra and Geometry}~{\bf 1}(1), pp.~38--72, 2017.

\bibitem{cahill2013constructing}
J.~Cahill, M.~Fickus, D.~G. Mixon, M.~J. Poteet, and N.~Strawn, ``Constructing
  finite frames of a given spectrum and set of lengths,'' {\em Applied and
  Computational Harmonic Analysis}~{\bf 35}(1), pp.~52--73, 2013.

\bibitem{needham2018symplectic}
T.~Needham and C.~Shonkwiler, ``Symplectic geometry and connectivity of spaces
  of frames,'' {\em arXiv preprint arXiv:1804.05899} , 2018.

\end{thebibliography}
\bibliographystyle{spiebib} 
\end{document}